\documentclass[11pt,english]{article}
\usepackage[T1]{fontenc}
\usepackage[utf8]{luainputenc}
\usepackage{xcolor}
\usepackage{pdfcolmk}
\usepackage{amsthm}
\usepackage{amsmath}
\usepackage{stmaryrd}
\usepackage{stackrel}
\usepackage{setspace}
\PassOptionsToPackage{normalem}{ulem}
\usepackage{ulem}

\makeatletter

\providecolor{lyxadded}{rgb}{0,0,1}
\providecolor{lyxdeleted}{rgb}{1,0,0}

\numberwithin{figure}{section}
\numberwithin{equation}{section}
\numberwithin{table}{section}

  \input amssymb.sty
\usepackage{etoolbox}
\patchcmd{\thebibliography}{\section*}{\section}{}{}

\usepackage{algpseudocode} \usepackage{algorithm}
\usepackage{algpseudocode,algorithm}
\usepackage{lipsum}
\usepackage{caption}
\usepackage{graphics}

\usepackage{amsmath}
\usepackage{amssymb}
\usepackage[all]{xy}

\usepackage{epsfig}\usepackage{youngtab}

\newcommand{\ef}{\end{equation}}
\chardef\bslash=`\\ 

\newdir{:=}{{}}
\newcommand*\colvec[3][]{
    \begin{pmatrix}\ifx\relax#1\relax\else#1\\\fi#2\\#3\end{pmatrix}
}
\newtheorem{thm}{Theorem}[section]
\newtheorem*{thm*}{Theorem}

\newtheorem{lem}{Lemma}[section]
\newtheorem*{lem*}{Lemma}

\newtheorem*{corl*}{Corollary}

\newtheorem{prop*}{Proposition}

\theoremstyle{definition}
\newtheorem{defn}{Definition}[section]

\newtheorem*{examp*}{Example}
\newtheorem*{remark*}{Remark}
\newtheorem*{CC*}{Crossover Conjecture}
\newtheorem*{Note*}{Note}
\newtheorem*{defn*}{Definition}

 \theoremstyle{remark}
\newtheorem{remark}{Remark}[section]

 \renewcommand{\sectionmark}[1]{}

\makeatother

\usepackage{babel}
\begin{document}

\title{Efficiency of mutation reduction for Brauer trees} 
\author {Zehavit Zvi}

\maketitle

 \begin{abstract}

Brauer tree algebras are important and fundamental blocks
in the modular representation theory of groups.  Aihara develped an algorithm, which we call a mutation reduction,  for getting from a Brauer tree algebra to the simpler Brauer star algebra using a sequence  of mutations centered on edges.  Schaps and Zvi, using the  
 Schaps-Zakay theory of pointing the tree, showed that different algorithms for the sequence of mutations give permutations of the edges. 
 
We give a modification of Aihara's algorithm and test it against the original algorithm for computational efficiency. We prove that all versions of Aihara's algorithm are the fastest possible in the sense of requiring the least number steps to reach the Brauer star, when compared to all possible complete mutation reduction algorithms.

\end{abstract}

\section{INTRODUCTION}

\noindent This work concerns Brauer tree algebras, a widely studied class of algebras of finite representation type which includes all  blocks of cyclic defect 
group in modular group representation
theory. A block of cyclic defect group is a Brauer 
tree algebra and its Green correspondent is a Brauer star algebra.
Rickard proved \cite{R2} that every Brauer tree algebra has a tilting complex which 
makes it derived equivalent to the corresponding Brauer star algebra.  
Schaps-Zakay \cite{SZ1} \cite{SZ2} showed that the tilting complexes in the 
opposite direction can be constructed from irreducible 
projective complexes of length two.  This is the 
all-at-once approach to the theory.

~

\noindent The other main approach is  the step-by-step approach
going back to König and Zimmermann {[}KZ1{]}, later formulated in terms
of mutations by Aihara \cite{Ai}, used by Chan \cite{Ch}
 and Zvonarevna \cite{Zv}, and extended by Kozakai \cite{Ko} to pointed trees.

\section{DEFINITIONS AND NOTATION}

\subsection{Brauer trees}

We first define  the Brauer
tree algebras.

\noindent \begin{defn}

\noindent Let $e$ and $m$ be natural numbers. A \textit{Brauer tree}
of type\textit{ }$(e,m)$\textit{ }is a finite tree $(V,\ensuremath{{\cal E}})$
where $V$ is the set of vertices, $\ensuremath{{\cal E}}$ is the
set of edges, $|\ensuremath{{\cal E}|}=e$, together
with a cyclic ordering of the edges at each vertex and a designation
of an exceptional vertex which is assigned multiplicity $m$. A counterclockwise circuit of the tree is called a \textit{Green's walk}.

\noindent \end{defn}

\noindent The set of all edges incident to vertex $u$ is denoted by $\ensuremath{{\cal E}}(u)$.
By \textquotedbl{}cyclic ordering\textquotedbl{} we mean that for
each edge \textit{E} in $\ensuremath{{\cal E}}(u)$ there is a `next'
edge in $\ensuremath{{\cal E}}(u)$ and that edge has a next edge
in $\ensuremath{{\cal E}(u)}$ etc., until each edge of $u$ is counted
exactly once, in which case\textit{ E} is the next one. We note that
if \textit{E} and $F$ are the only edges of $u$ then $F$ is next
after \textit{E} and \textit{E} is next after $F$.

\begin{defn} \label{primary} The first edge in the cyclic order after the edge which enters the vertex from the path leading from the exceptional vertex will be called the \emph{primary edge}.  The last edge in cyclic ordering before that entering edge is called the \textit{coprimary edge}.
\end{defn}
\noindent \smallskip{}

\noindent Every Brauer tree can be embedded in the plane in such a
way that the cyclic ordering on each $\ensuremath{{\cal E}}(u)$ is
the counterclockwise direction. Two important examples of Brauer trees are:

(i) The \textit{star} with the exceptional vertex in the center.

(ii) The \textit{linear tree}, which includes, for example, the Brauer trees of blocks of cyclic defect in the symmetric groups.

\noindent \smallskip{}

\noindent We relate Brauer trees to the structure of algebras.

\noindent \begin{defn}An algebra $A$ is called a \textit{Brauer
tree algebra} if there is a Brauer tree such that the indecomposable
projective $A$-modules can be described by the following algorithm:\renewcommand{\labelenumi}{(\roman{enumi})}
\begin{enumerate}
\item There is bijection between the edges of the tree and the isomorphism
classes of simple $A$-modules, i.e. each edge is labelled by the
corresponding isomorphism class. 
\item If $S$ is a simple $A$-module and $P_{S}$ is the corresponding
indecomposable projective $A$-module then $P_{S}\supseteq\text{rad}(P_{S})\supseteq\text{soc}(P_{S})\cong S$
and $\text{rad}(P_{S})/\text{soc}(P_{S})$ is a direct sum of one
or two uniserial modules corresponding to the two vertices of the edge, with composition factors determined 
by a clockwise circuit around the vertex.  For edges at the exceptional vertex, 
the clockwise circuit is made $m$ times.
\end{enumerate}
\noindent \end{defn}

\noindent  Even when the algebra which interests us is the block
of a group algebra, we will not use the actual block but rather its skeleton. In this paper we will not need with the algebra structure nor the exact tilting complex giving the mutations. We will be working solely with the combinatorics, whose relations to the Brauer tree algebras and to tilting complexes have been proven in earlier works \cite{SZ,Ko}.

\subsection{Mutation reduction}

Assume we are given a Brauer tree $G$, with multiplicity $m$.  If $m>1$, 
then there is a designated exceptional vertex $v$. For $m=1$, we assume that one of
the vertices has been chosen as the exceptional vertex $v$. Since our graph is a tree,
there is a well-defined distance of each vertex $u$ from $v$ given by 
counting the number of edges on the unique path connecting the vertex to the exceptional vertex. If the edges of the tree are labelled, then each vertex can be given
 the same label as the first edge on this unique path. The distance of an edge is the distance of the vertex farthest from the exceptional vertex.
  
\begin{defn}
The \textit{total distance} $d(G)$ in the tree will be the sum of the distances for all the non-exceptional vertices or the sum of the ditances of all edges. 
\end{defn}
\begin{remark}
For the Brauer star the distance will equal $e$, the number of edges, since each vertex will be at distance $1$. For all other Brauer trees $G$ we have $d(G)>e$.
\end{remark}

\begin{defn} We will describe two dual forms of mutation centered on an edge $i$ of a Brauer tree which is not directly connected to the exceptional vertex. We number the edges at the vertex $u$ of $i$ closest to the exceptional vertex by $k_1,k_2\dots,k_a$ and the edges at the other vertex $v$ by $j_1, j_2, \dots, j_b$.
	\begin{itemize}
		\item  The mutation $\mu^-$. If $k_1$ is the edge immediately before $i$ at the vertex $u$  closest to the exceptional vertex and $j_1$ is the edge immediately preceding $i$ at the far vertex $v$, then we detach $i$ from both vertices and reattach $i$ to the vertex of $k_1$ farther from $i$ in such a way that $k_1$ is now before $i$ in the clockwise direction, and we reattach $i$ to the vertex of $j_1$ farther from $i$ in such a way that $j_1$ is now before $i$ in the clock-wise direction.
		
		\item The mutation $\mu^+$ \cite{Ch}.  This is dual, so that we take the edges $k_a$  after $i$ in the clockwise direction and $j_b$ which is after $i$ in the clockwise direction.  We detach $i$ from both vertices and reattach at the far end of $k_a$ and $j_b$, in such a way that $k_a$ is now before $i$ and $j_a$ is after $i$ in the clockwise direction.
	\end{itemize}
\end{defn}

\[
\xymatrix{
	&	&	&	&	&	& {\circ} \\
	& {\circ}   \ar@{-}[ul]^{l_1} \ar@{-}[ur]_{l_m} \ar@{}[u]|(0.4){\dots}	&	&	&	& {\circ} \ar@{-}[ur]^{h_1} \ar@{-}[dr]_{h_d} \ar@{}[r]|(0.4){\vdots}	&	\\
{\circ}	&	& {\circ} \ar@{-}[rr]^i \ar@{-}[dl]^{k_1} \ar@{-}[ul]_{k_a} \ar@{}[l]|(0.4){\vdots}		&	& {\circ} \ar@{-}[ur]^{j_1} \ar@{-}[dr]_{j_b} \ar@{}[r]|(0.4){\vdots} &	& {\circ}  \\
	& {\circ} \ar@{-}[dl]^{g_1} \ar@{-}[ul]_{g_c} \ar@{}[l]|(0.4){\vdots}	&	&	&	& {\circ}  \ar@{-}[ld]_{n_1}  \ar@{-}[rd]^{n_l}  \ar@{}[d]|(0.4){\dots} \\
{\circ}	& & & & & & & & & & & & & &\\ }
\] 
$\mu-:$
\[
\xymatrix{
	&	&	&	&	&	& {\circ} \\
	& {\circ}	  \ar@{-}[ul]^{l_1} \ar@{-}[ur]_{l_m} \ar@{}[u]|(0.4){\dots}	 &	&	&	& {\circ} \ar@{-}[ur]^{h_1} \ar@{-}[dr]_{h_d} \ar@{}[r]|(0.4){\vdots}	&	\\{\circ}	 &	& {\circ} \ar@{-}[dl]_{k_1}   \ar@{-}[ul]_{k_a} \ar@{}[l]|(0.4){\vdots}	 & & {\circ} \ar@{-}[ur]_{j_1} \ar@{-}[dr]_{j_b}  \ar@{}[r]|(0.4){\vdots}	&	& {\circ} \\
	& {\circ} \ar@{-}[rrrruu]^i  \ar@{-}[dl]^{g_1} \ar@{-}[ul]_{g_c} \ar@{}[l]|(0.4){\vdots}	&	&	&	& {\circ}    \ar@{-}[ld]_{n_1}  \ar@{-}[rd]^{n_l}  \ar@{}[d]|(0.4){\dots} \\
{\circ}	& & & & & & & & & & & & & &
}
\]
$\mu+:$
\[
\xymatrix{
	&	&	&	&	&	& {\circ} \\
	& {\circ}   \ar@{-}[ul]^{l_1} \ar@{-}[ur]_{l_m} \ar@{}[u]|(0.4){\dots}	&	&	&	& {\circ} \ar@{-}[ur]^{h_1} \ar@{-}[dr]_{h_d} \ar@{}[r]|(0.4){\vdots}	&	\\{\circ}	 &	& {\circ} \ar@{-}[dl]^{k_1}   \ar@{-}[ul]^{k_a} \ar@{}[l]|(0.4){\vdots}		&	& {\circ}    \ar@{-}[ur]^{j_1} \ar@{-}[dr]^{j_b}  \ar@{}[r]|(0.4){\vdots}	&	& {\circ}	\\
	& {\circ} \  \ar@{-}[dl]^{g_1} \ar@{-}[ul]_{g_c} \ar@{}[l]|(0.4){\vdots}	&	&	&	& {\circ}  \ar@{-}[lllluu]^i    \ar@{-}[ld]_{n_1}  \ar@{-}[rd]^{n_l}  \ar@{}[d]|(0.4){\dots}    \\
{\circ}	& & & & & & & & & & & & & &
}
\]
\noindent \begin{defn}
A \textit{mutation reduction} is a mutation or  sequence of mutations such that the distance of 
each vertex from the exceptional vertex does not ever increase,
and such that the total distance actually decreases. A mutation
reduction which ends at the Brauer star is called \textit{complete}.
\end{defn}

\section{MUTATION REDUCTION ALGORITHMS}

We first quote of a lemma from \cite{SZ}

\begin{lem}\label{branch} \cite{SZ} Assume we are given a Brauer tree.
 
\begin{enumerate}
\item A mutation $\mu^-$ which is a mutation reduction must be centered at a primary edge.
\item A mutation centered at a primary edge connected to an edge adjacent to the 
exceptional vertex is a mutation reduction.

\end{enumerate}
\end{lem}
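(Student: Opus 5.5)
Both parts come down to tracking how the distances change under $\mu^-$ centered at an edge $i$. We fix notation. Let $u$ be the near vertex of $i$ and $w$ the far vertex. Let $f$ be the edge entering $u$ from the exceptional vertex $v_0$, so $d(w)=d(u)+1$. The mutation $\mu^-$ detaches $i$ and reattaches it between $x$ and $y$. Here $x$ is the endpoint of $k_1$ other than $u$, and $y$ is the endpoint of $j_1$ other than $w$. The edges $j_1,\dots,j_b$ stay at $w$, and only $i$ moves. Vertices outside the part of the tree hanging from $w$ (through $i$) are unaffected, because the path from $v_0$ to them never passes through $i$.

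For part 1, we compare $d(x)$ with $d(u)$, splitting into two cases.

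\emph{Case A: $k_1\neq f$.} Then $k_1$ leads away from $v_0$, so $d(x)=d(u)+1=d(w)$. Consider any vertex $z$ beyond $y$, including $y$ itself. Its new path to $v_0$ runs through $i$ to $x$ and then to $u$. This is strictly longer than before: the part from $v_0$ to $x$ already has length $d(w)$, and we must then add one step to $y$ before following the old route.

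Concretely, $y$ is a neighbour of $w$ with $d(y)=d(w)+1$. It now hangs from $x$ through $i$, so its new distance is $d(x)+1=d(w)+1$, which is unchanged. Meanwhile $w$ has lost its connection through $i$. The edge $j_1$ remains at $w$, so $w$ is now reached from $v_0$ through $y$: its new distance is $d(y)+1=d(w)+2$. This is an increase, so the distance of $w$ goes up.

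The bookkeeping of which vertex hangs from which will be checked against the pictures in the definition of $\mu^-$. The conclusion of this case is that $\mu^-$ is not a mutation reduction.

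\emph{Case B: $k_1=f$.} Then $i$ is the first edge after $f$ in the cyclic order at $u$, which is exactly the definition of a primary edge in Definition~\ref{primary}.

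So a distance-nonincreasing $\mu^-$ forces Case B, which proves part 1.

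For part 2, suppose $i$ is primary and $f$ is adjacent to the exceptional vertex, so $x=v_0$ and $d(u)=1$. After the mutation, $i$ joins $v_0$ to $y$. Then $y$ has distance $1$, which is less than its old value $d(y)=3$. The vertex $w$ hangs from $y$ through $j_1$, so its distance is $2$, unchanged. Every vertex beyond $w$ or beyond $y$ keeps or decreases its distance, since its path to $v_0$ either still passes through $w$ or is shortened by going through $y$. Hence no distance increases and the total decreases by at least $2$, so the mutation is a mutation reduction.

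The dual statement for $\mu^+$ follows symmetrically, with coprimary in place of primary.

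The main obstacle is getting the reattachment picture right: which of $w$ and $y$ ends up closer to $v_0$ after the move, and how the subtrees rehang. I will settle this first on the displayed diagram, using $j_1$ as the link that keeps $w$ attached.
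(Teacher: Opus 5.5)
Your proof is correct and follows essentially the argument the paper gives for the dual Lemma~\ref{branch+}; Lemma~\ref{branch} itself is quoted without proof. In part 1, a non-primary center has its near end moved to the far end of $k_1$, which is farther from the exceptional vertex. In part 2 the bookkeeping is the same as the paper's: $i$ drops from distance $2$ to $1$, $j_1$ from $3$ to $2$, everything beyond $y$ drops by $2$, and the rest of the subtree at $w$ is unchanged.

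There is one error to fix in Case A. The sentence saying that $y$ and every vertex beyond it become strictly farther is false: $y$ keeps distance $d(x)+1=d(w)+1$, as your very next paragraph computes. The vertices whose distance actually increases are $w$ and everything hanging from $w$ via $j_2,\dots,j_b$, each by $2$.

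Your witness also needs $j_1$ to exist. If $w$ carries no edge besides $i$, use the paper's uniform witness, the center $i$ itself. Its new near end $x$ lies in the unchanged component of the exceptional vertex, so its distance becomes $d(x)+1=d(u)+2>d(u)+1$.
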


We now prove a version of this lemma for $\mu^+$. 
\begin{lem}\label{branch+}  Assume we are given a Brauer tree.
	
	\begin{enumerate}
		\item A mutation $\mu^+$ which is a mutation reduction must be centered at a co-primary edge.
		\item A mutation centered at a co-primary edge connected to the exceptional vertex  is a mutation reduction.		
	\end{enumerate}
\end{lem}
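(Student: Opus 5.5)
The plan is to imitate the argument for Lemma \ref{branch}, exchanging the roles of ``immediately before'' and ``immediately after'' in the cyclic orders. Fix an edge $i$ not incident to the exceptional vertex $v$. Let $u$ be its near vertex and $w$ its far vertex; the edges at $u$ are $k_1,\dots,k_a$, and those at $w$ are $j_1,\dots,j_b$. The first step is to record what $\mu^+$ does to distances. The edge $i$ is moved so that its two endpoints become the far endpoint of $k_a$ and the far endpoint of $j_b$. All other edges keep their endpoints, but whole branches may be reattached, so their distances can change. The whole analysis therefore reduces to one question: in which configuration do the far end of $k_a$ and the far end of $j_b$ lie, relative to $v$?

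For part (1), suppose $i$ is not co-primary at $u$. By Definition \ref{primary}, the co-primary edge at $u$ is the last edge before the entering edge $f$ (the edge of the path from $v$ into $u$). So if $i$ is not co-primary, the edge $k_a$ following $i$ in the relevant orientation is not $f$. Then $k_a$ points away from $v$, and its far endpoint $x$ has distance $d(u)+1$. After the mutation, $i$ joins $x$ to the far end of $j_b$. Every vertex beyond $w$ that used to hang off $i$ through $w$ is then reached through $x$ and the new position of $i$, which is one step further out. This gives a strict increase in distance for at least the vertices of the branch at $w$ (or for $w$ itself, if it is a leaf, via the new attachment point). That contradicts the definition of a mutation reduction. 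Hence $k_a$ must be the entering edge $f$, which says exactly that $i$ is co-primary.

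For part (2), assume $i$ is co-primary and $f=k_a$ is incident to $v$, so $u$ is at distance $1$. Then the far end of $k_a$ is $v$ itself, and after $\mu^+$ the edge $i$ is attached at $v$. Its other end goes to the far end of $j_b$, which lies in the branch beyond $w$. I would check case by case, treating $b=1$ (where $j_b$ is $i$, so the end stays at $w$ by the convention) separately from $b>1$. In each case the new distances of all vertices in the branch previously beyond $i$ decrease by one or stay the same. In particular, the far vertex of $i$ moves from distance $2$ to distance $1$, so the total distance strictly drops.

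I expect the main obstacle to be bookkeeping with the orientation conventions: the definition phrases $\mu^+$ in terms of clockwise order, while ``co-primary'' is phrased in the cyclic (counterclockwise) order. Getting ``$k_a$ is the edge after $i$'' to coincide with ``$i$ is the last edge before the entering edge'' requires care. I would draw the $\mu^+$ picture above, compare it with the $\mu^-$ proof from \cite{SZ}, and simply dualize each step.
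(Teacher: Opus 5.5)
Your outline matches the paper's proof: for (1) the paper also argues that if $i$ is not co-primary then $k_a$ is not the entering edge, so $i$ is reattached at the far end of an edge pointing away from $v$ and its distance goes up; for (2) it computes the new distances directly once $k_a$ leads to $v$. Your distance bookkeeping has two slips, neither of which affects the conclusions: writing $y$ for the far end of $j_b$, where $i$ is re-hung, in (2) with $w$ not a leaf, $y$ and everything beyond it drop by two (not one) while $w$ and its other branches keep their distances, and in (1) the subtree at $y$ does not move out at all, so the strict increase comes from $w$ itself (pushed out by two) or, most simply, from the edge $i$, whose distance rises from $d(u)+1$ to $d(u)+2$.
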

\begin{proof}
	\begin{enumerate}
\item If the mutation is not centered at a coprimary edge, then the mutation reattaches
the center at the far end of the edge before it in the cyclic ordering, which is at greater
distance from the exceptional vertex, in contradiction to our assumption that we have a mutation reduction. The only mutation which can reattach at a shorter distance from the exceptional vertex is when it is being reattached to the other end of the entering vertex, which means that the center was a coprimary edge. 
\item
Suppose that  $w$ is a coprimary edge connected to an edge $u$ adjacent to the exceptional vertex, and let $u_0$ be the common vertex of $u$ and $w$. 
Let $U$ be the remaining branches at $u_0$. The effect of mutation  $\mu^{+}$ is to create a new branch by lopping off $w$ and the subgraph $S$ of all edges connected to the exceptional vertex through the center $w$ of the mutation. 

Let $t$ be the coprimary edge at the far end of $w$, and let $T$ be the set of branches at the far end of t. Let $W$
be the remaining branches connected to the far end of $w$. After the mutation of type $\mu^+$, the
original branch rooted at $u$ will now be replaced by two branches, one rooted at $w$ and now
connected directly to $T$, while $t$, which has become coprimary, now has $W$ at its far end. The
other branch will be rooted at $u$, now connected only to $U$ at its far end, and will follow the
branch rooted at $w$ immediately in the counterclockwise ordering at the exceptional vertex  $v$.
It remains to show that this operation was actually a mutation reduction. The edge $w$,
once at distance $2$, is now at distance $1$. The edge $t$, once at distance $3$, is now at distance $2$,
and every edge of $T$, originally connected to $t$ and thus connected to the exceptional vertex
via three edges, $u$, $w$, $t$, is now connected to $v$ via $w$ and therefore every vertex is at distance
two less than before. Finally, the vertices of $W$, now all connected to the exceptional vertex
via $w$, $t$ instead of $u$, $w$, all remain at exactly the same distance that they had before, as will
all the vertices in $U$. The distances of all other edges in the tree are unchanged by the mutation.
\end{enumerate}
\end{proof}

\[
\xymatrix{
	&	&	&	&	&	& {\circ} \\
	& {\circ}   \ar@{-}[ul] \ar@{-}[ur] \ar@{}[u]|(0.4){\dots}	&	&	&	& {\circ} \ar@{-}[ur] \ar@{-}[dr] \ar@{}[r]|(0.4){\vdots}	&	\\
	{\circ}	&	& {\circ} \ar@{-}[rr]^w \ar@{-}[dl] \ar@{-}[ul]_{u} \ar@{}[l]|(0.4){U}		&	& {\circ} \ar@{-}[ur] \ar@{-}[dr]_{t} \ar@{}[r]|(0.4){W} &	& {\circ}  \\
	& {\circ} \ar@{-}[dl] \ar@{-}[ul] \ar@{}[l]|(0.4){\vdots}	&	&	&	& {\circ}  \ar@{-}[ld]  \ar@{-}[rd] \ar@{}[d]|(0.4){T} \\
	{\circ}	& & & & & & & & & & & & & &\\ }
\]

The following lemma was proven in \cite{SZ}, Lemma 3.1(3) for $\mu^-$ alone. We now prove it for a mutation reduction which includes also $\mu^+$
\begin{lem}
	In any complete mutation  reduction, each branch is eventually split entirely into separate
	leaves attached to the exceptional vertex.  All the edges in the original branch correspond to
	an interval around the star and the intervals follow the counterclockwise ordering of the branches.
\end{lem}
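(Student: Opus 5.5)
We argue by induction on the number of mutations in the complete mutation reduction, using an invariant that is preserved by every single step. Number the branches at the exceptional vertex $v$ as $B_1,\dots,B_r$ in counterclockwise order, where a \emph{branch} means an edge at $v$ together with everything hanging from its far vertex. Call a configuration \emph{admissible} if two conditions hold. First, each current branch at $v$ consists entirely of edges from a single original branch $B_s$. Second, for each $s$, the current branches containing edges of $B_s$ are consecutive in the cyclic order at $v$, and these blocks appear in the order $B_1,\dots,B_r$. The original tree is trivially admissible. In the Brauer star every branch is a single leaf, so admissibility of the final configuration is exactly the statement of the lemma: each $B_s$ has been split into leaves at $v$ filling an interval, and the intervals follow the counterclockwise order of the original branches.

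The main step is to show that a single mutation reduction step preserves admissibility. By Lemma \ref{branch} (1) a reducing $\mu^-$ is centered at a primary edge, and by Lemma \ref{branch+} (1) a reducing $\mu^+$ is centered at a coprimary edge. We split into two cases according to the distance of the center $i$.

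If the vertex $u$ of $i$ nearest $v$ is not $v$'s neighbour through a branch root, i.e.\ $i$ is at distance at least $3$, then the mutation detaches $i$ and reattaches it at the far end of the edge entering $u$. Every edge involved (the entering edge, $i$, and the edges at the far vertex of $i$) lies inside one current branch. The step therefore only rearranges that branch internally, and the set of edges of the branch is unchanged. Admissibility is preserved.

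If $i$ is at distance $2$, so its near vertex $u_0$ is the far end of a branch root $u$, then the computation in the proof of Lemma \ref{branch+} (2), and its dual from \cite{SZ} for $\mu^-$, describes the result explicitly. The branch rooted at $u$ is replaced by two branches rooted at $i$ and at $u$, and these are \emph{adjacent} at $v$: the new one at $i$ comes immediately before $u$ for $\mu^+$, and immediately after $u$ for $\mu^-$. Both new branches contain only edges of the old branch, so both consist of edges of the same $B_s$. Since they are inserted adjacently, the block of $B_s$ stays consecutive and the order of the blocks is unchanged.

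The main obstacle is checking that no other kind of step can occur, that is, that a reducing mutation never moves an edge across branches at $v$. I would verify this directly from the definitions of $\mu^\pm$: the reattachment point is the far end of an edge adjacent to $i$ at $u$. If that adjacent edge is not the entering edge, the distance strictly increases, which is excluded. So reattachment is always at the far end of the entering edge, which is either inside the same branch or is $v$ itself. I would also check that mutations centered at edges directly at $v$ are not part of the allowed moves, by the definition of mutation. With this in hand, induction over the steps completes the proof.
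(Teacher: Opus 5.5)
Your proof is correct and uses the same idea as the paper: new branches at $v$ come only from centers attached to a vertex at distance $1$, each such branch is placed adjacent to the old one and contains only its edges, and deeper reducing mutations stay inside a single branch. Your admissibility invariant, checked step by step, is a cleaner packaging than the paper's appeal to treating the branches independently one after another. (For $\mu^-$ you put the new branch after $u$, whereas the paper puts the branch rooted at the center first in counterclockwise order; your argument only uses adjacency, so this does not matter.)
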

	\begin{proof}
		Let us choose a branch and label all the edges of that branch by $\{S_1,\dots, S_t\}$.  A new branch is created if and only if the center is a primary edge attached to a vertex of distance $1$ on which we perform $\mu^-$ or the center is a coprimary edge attached to a vertex of distance $1$ on which we perform $\mu^+$ and the new branch is adjacent to the previous one. Both the old branch and the new contain only edges from the set  $\{S_1,\dots, S_t\}$.  Thus when we finish the complete mutation reduction, the edges of the original branch form an interval, without intervening edges from any other branch. The action by mutation on differenct branches is independent, so we can perform all the mutations on a single branch and then proceed to the next branch in counterclockwise order, reducing it to an interval as well.		
	\end{proof}

\noindent We now give the original algorithm given by Aihara \cite{Ai}.

\noindent \uline{Aihara's Algorithm}\cite{Ai} 
\begin{enumerate}
\item Choose an initial branch.  

\item In a Green's walk starting at the root of the  the initial branch choose the first   primary edge
$w$ attached to  an edge adjacent to the exceptional vertex.  If the tree is not a star, there must
be such an edge. 

\item By Lemma \ref{branch}(2), the mutation centered on this edge $w$ is a mutation
reduction, and from the proof we see that it creates two adjacent branches from the original,
the first of which in counter-clockwise order is rooted at $w$.

\item If $w$ was on the initial branch, let the new initial branch be the new branch rooted at $w$, and otherwise
let the initial branch remain as before.  Begin again from 2.

\end{enumerate}

In \cite{Ko}, Kozakai introduced an algorithm for mutation reduction on pointed Brauer trees in which he uses either $\mu^-$ or $\mu^+$ depending on the pointing.  What we propose is a modification of Aihara's algorithm which uses either $\mu^-$ or $\mu^+$, depending on properties of the primary or coprimary edge connected to an edge adjacent to the exceptional vertex. 

\begin{defn}
	A \textit{generalized Aihara algorithm} is a complete mutation reduction in which every center is a primary edge on which we perform $\mu^-$  or a coprimary edge on which we perform $\mu^-$, where eadh center is attached to an edge adjacent to the exceptional vertex.
\end{defn}

Our original hope was that by careful choice of the edge, we could reduce the total distance more quickly.  To test this possibility,  we created many random graphs and proposed a generalized Aihara algorithm that would use on $\mu{-}$ and $\mu{+}$ depending on the total distance of all edges on each of the two possible branches, the  primary and coprimary. We checked the efficiency of this proposed algorithm against the original Aihara algorithm with $\mu{-}$ on each of the generated trees and discovered that the original algorithm was faster.  Then we asked for the number of steps in the two algorithms and discovered that they were identical.  The new algorithm was only slower because of the time required to check the total distance. This lead to the following theorem.

 \begin{thm}
A generalized Aihara algorithm is the more efficient than any other possible mutation reduction algorithm. Assume we start with a uni-branch Brauer tree.  In terms of number of centers, all generalized Aihara algorithms have the same number, equal to $e-1$, where $e$ is the number of edges in the tree.
 \end{thm}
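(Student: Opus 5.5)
The plan is to find a monotone invariant that every mutation changes by at most one and that every generalized Aihara step changes by exactly one. The natural choice is the number of edges adjacent to the exceptional vertex, equivalently the number of branches at $v$; call it $b(G)$. For a uni-branch tree $b(G)=1$, and for the Brauer star $b=e$. Any complete mutation reduction therefore has to raise $b$ from $1$ to $e$.

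The first step is to show that a single mutation raises $b$ by at most one. A mutation $\mu^\pm$ is centered on a single edge $i$ that is not incident to $v$, and it only detaches and reattaches $i$. No other edge changes its endpoints, so the only edge whose attachment to $v$ can change is $i$ itself. Hence $b$ increases by at most one per step. This already gives the lower bound: any complete mutation reduction starting from a uni-branch tree uses at least $e-1$ centers.

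The second step shows that a generalized Aihara step raises $b$ by exactly one and never loses a branch. By Lemma \ref{branch}(2) and Lemma \ref{branch+}(2), and from their proofs, a $\mu^-$ centered on a primary edge, or a $\mu^+$ centered on a coprimary edge, attached to an edge $u$ adjacent to $v$ reattaches the center $w$ directly at $v$. The old branch rooted at $u$ survives, still rooted at $u$. So the branch count goes up by exactly one. Consequently, from a uni-branch tree, after $k$ steps there are exactly $k+1$ branches. The algorithm halts exactly at the star, since otherwise the existence statement in step 2 of Aihara's algorithm, or its $\mu^+$ analogue, supplies a further center. So every generalized Aihara algorithm uses exactly $e-1$ centers. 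Combined with the lower bound, this proves optimality among all complete mutation reductions. The lemma on intervals shows the argument is branchwise, so for a general tree with $b_0$ branches the count becomes $e-b_0$, still optimal.

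The main obstacle is the first step's claim that a mutation whose center is not adjacent to $v$ cannot create more than one new edge at $v$. In particular, one must check that reattaching $i$ never makes some other edge incident to $v$. This holds because the definition of mutation moves only $i$, so I would verify it carefully against the definition of $\mu^\pm$, including the degenerate case where $k_1$ or $k_a$ is itself adjacent to $v$. That degenerate case is exactly the situation in which $i$ lands at $v$.
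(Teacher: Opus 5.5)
Your proposal is correct and is essentially the paper's argument: you track the number of branches (edges) at the exceptional vertex. A mutation moves only its center, which is not incident to $v$, so no branch is ever lost and at most one is gained per step, while every generalized Aihara step gains exactly one, which forces exactly $e-1$ centers. Your remark that the center lands at $v$ only when $k_1$ (resp.\ $k_a$) is the entering edge at a vertex of distance $1$ also gives the paper's stronger claim: any complete reduction that attains $e-1$ steps must itself be a generalized Aihara algorithm.
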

\begin{proof}
No mutation can add more than than one branch at the exceptional vertex by the defnition of mutation, and the only center which can add such a branch at the exceptional vertex is a mutation reduction by a center attached to an edge adjacent to the exceptional vertex.

Thus the number of new branches at each step is either zero or one and a branch, once created, is not destroyed by any mutation reduction, since the only mutation which removes a branch from the exceptional vertex is a mutation with a center adjacent to the exceptional vertex.

 If the center is always attached to an edge adjacent to the exceptional vertex as in all the generalized Aihara algorithms then at each stage a new branch is created. If one ever takes a center which is not attached to an edge adjacent to the exceptional vertex, then the algoithnm does not create a new branch at each step.  
 
 All the generalized Aihara algorithms create a new branch at each step.  All the possible centers will be exhausted after $e-1$ steps.  No other complete mutation reduction algorithm can match this minimal number of steps and thus the generalized Aihara's  algorithms are more efficient than any others, including Kozakai's \cite{Ko} and Zvi's \cite{Z}.

\end{proof}

\noindent{Department of Computer Science, Shamoon college of Engineering, Be'er Sheva, Israel}

\noindent{Department of Computer Science, Shamoon College of 
Engineering,  Be'er Sheva, 8410802 Israel}.\\
email: zehavzv@sce.ac.il\\

\noindent Acknowledgements: This research was partly funded by a grant from the Research and Development Authority at Shamoon College of Engineering.

\noindent MSC2020: {20C05,20C20, 16E35,18G80}\\
Keywords: {Brauer trees,  tilting complexes, mutations, algorithms}\\

\end{document}